\newtheorem{thm}{Theorem}[section]
\newtheorem{prop}[thm]{Proposition}
\theoremstyle{definition}
\newtheorem{defn}[thm]{Definition}
\theoremstyle{remark}
\numberwithin{equation}{section}
\begin{document}

\title[Tensor products of function systems revisited]{Tensor products of function systems revisited}

\author{Kyung Hoon Han}

\address{Department of Mathematics, The University of Suwon, Gyeonggi-do 445-743, Korea}

\email{kyunghoon.han@gmail.com}

\subjclass[2000]{46B40, 46B28}

\keywords{function system, Archimedean ordered space, Archimedean ordered $*$-vector space, tensor product, nuclear}

\thanks{This work was supported by the National Research Foundation of Korea Grant funded by the Korean Government (2012R1A1A1012190)}

\date{}

\dedicatory{}

\commby{}

%%% ----------------------------------------------------------------------

\begin{abstract}
Based on the Archimedeanization developed by Paulsen and Tomforde, we give an explicit description for the positive cones of  maximal tensor products of function systems. From this description, we obtain an approximation theorem for nuclear maps between function systems. As an application, we give elementary proofs on several characterizations of nuclear function systems that are already known.
\end{abstract}

%%% ----------------------------------------------------------------------
\maketitle
%%% ----------------------------------------------------------------------

\section{Introduction}

A real vector space $V$ is called an ordered real vector space if there exists a cone $V^+ \subset V$ such that $V^+ \cap - V^+ = \{ 0\}$. The cone $V^+$ induces a partial order by $v \ge w$ if and only if $v-w \in V^+$. For an ordered real vector space $(V, V^+)$, an element $e$ in $V$ is called an order unit if for each $v$ in $V$, there exists a real number $r>0$ such that $r e \ge v$. We call an order unit $e$ an Archimedean order unit if $\varepsilon e + v \in V^+$ for any $\varepsilon >0$ implies that $v \in V^+$. The order norm of an ordered real vector space with an Archimedean order unit is defined by $$\| v \| = \inf \{ r>0 : -re \le v \le re \}.$$ An ordered real vector space equipped with an Archimedean order unit is called an Archimedean (partially) ordered (vector) space \cite{Ka,PT} or a function system \cite{Ef,NP}. Kadison proved that every Archimedean ordered space can be embedded into a real continuous function algebra on a compact Hausdorff space through a unital order isomorphism that is also isometric with respect to the order norm \cite{Ka}. Kadison's representation theorem for an Archimedean ordered space justifies the alternative term {\sl function system}.

The tensor products of function systems have been studied in \cite{NP} and \cite{Ef}. Two extremal tensor products of function systems, the minimal tensor product and the maximal tensor product were considered. A function system $V$ is called nuclear if $V \otimes_{\min} W = V \otimes_{\max} W$ for any function system $W$. Along the line of the duality between function systems and compact convex sets, Namioka and Phelps proved that the nuclearity corresponds to the Choquet simplex \cite[Theorem 2.2]{NP}.

Recently, Paulsen and Tomforde introduced the Archimedeanization \cite{PT} and this has been applied to operator system theory in a series of papers \cite{PTT,KPTT1,KPTT2}. In this paper, we focus on its application to function systems. Section 3 gives an explicit description for the positive cones of  maximal tensor products of function systems applying the Archimedeanization. Based on this, we prove that the maximal tensor product is projective as a bifunctor on the category consisting of function systems and unital positive maps. Section 4 applies the ideas of \cite{HP} to function systems and obtains an approximation theorem for nuclear maps between function systems. As an application, we give elementary proofs for several characterizations of nuclear function systems that are already known \cite[Theorem 7.4]{Ef}, \cite[Theorem 1.4]{NP}.

The referee kindly pointed out to the author that there has been another line of research on tensor products of Archimedean partially ordered vector spaces with order unit and Archimedeanization procedures, mainly influenced by Fremlin and extended by others, for which we refer to \cite{El,F,GL,BR,GK}.

\section{Preliminaries}

Two extremal tensor products of function systems have been studied in \cite{NP} and \cite[Section 7]{Ef}. Suppose that $V$ and $W$ are function systems. Given faithful representations $\varphi : V \to C(X)$ and $\psi : W \to C(Y)$, their tensor product $\varphi \otimes \psi : V \otimes W \to C(X \times Y)$ is also faithful \cite[Section 7]{Ef}. The minimal tensor product $V \otimes_{\min} W$ is defined as the function system structure on $V \otimes W$ induced by $\varphi \otimes \psi$. The minimal tensor product is independent of the choice of the faithful representations $\varphi$ and $\psi$ \cite[Lemma 7.1]{Ef}. The minimal tensor products of function systems have an alternative description. The positive cone of the minimal tensor product $V \otimes_{\min} W$ is given by $$(V \otimes_{\min} W)^+ = \{ z \in V \otimes W : (f \otimes g) (z) \ge 0, f \in S(V), g \in S(W) \},$$ where $S(V)$ denotes the state space on $V$ \cite{NP}.

Let $(V \otimes W)^d$ denote the algebraic dual of $V \otimes W$. The maximal state space is defined as $$S_{\max} (V \otimes W) := \{ r \in (V \otimes W)^d : r|_{V^+ \otimes W^+} \ge 0 \}$$ which is a weak$^d$ compact convex subset of $(V \otimes W)^d$. We can regard elements of $V \otimes W$ as continuous affine functions on $S_{\max}(V \otimes W)$. This is a faithful realization. The maximal tensor product $V \otimes_{\max} W$ is defined as the function system structure on $V \otimes W$ induced by the inclusion $V \otimes W \subset C(S_{\max}(V \otimes W))$ \cite[Section 7]{Ef}.

If $\varphi : V_1 \to V_2$ and $\psi : W_1 \to W_2$ are unital positive maps, then their tensor products $\varphi \otimes \psi : V_1 \otimes_{\min} W_1 \to V_2 \otimes_{\min} W_2$ and $\varphi \otimes \psi : V_1 \otimes_{\max} W_1 \to V_2 \otimes_{\max} W_2$ are unital positive. In particular, $\varphi \otimes \psi : V_1 \otimes_{\min} W_1 \to V_2 \otimes_{\min} W_2$ is an order embedding if $\varphi$ and $\psi$ are order embeddings. This is not true for the maximal tensor product.

There is a natural duality between compact convex sets and function systems. For a compact convex set $K$, the space $A(K)$ of real continuous affine functions on $K$ is a function system. For a function system $V$, the state space $S(V)$ equipped with a weak$^*$-topology is a compact convex set. By the barycenter formula \cite[Proposition 1.2.2]{A}, the state space $S(A(K))$ consists entirely of evaluations at points in $K$. In fact, the state space $S(A(K))$ is affinely homeomorphic to $K$. Conversely, the continuous affine functions $A(S(V))$ consists entirely of evaluations at elements in $V$. The affine function system $A(S(V))$ is unitally order isomorphic to $V$.

Let $P(K)$ denote the cone of real continuous convex functions on a compact convex set $K$. For $\mu, \nu \in M_{\mathbb R}(K)$, the Choquet order is defined as $$\mu \prec \nu \quad \Leftrightarrow \quad \mu(f) \le \nu (f), \ \forall f \in P(K).$$ Roughly speaking, the Choquet order measures how far the mass of a measure is distributed to the outside. A complex measure $\mu$ on a compact convex set $K$ is said to be a boundary measure if $|\mu|$ is a maximal element of $M^+(K)$ with respect to the Choquet order. Every point $x$ in a compact convex set $K$ can be represented by a positive normalized boundary measure $\mu$; that is, $$a(x)=\int_K a(y) d\mu (y), \qquad \forall a \in A(K).$$ The boundary measure associated with each point in $K$ is unique if and only if the dual space $A(K)^*$ is lattice ordered. In this case, we call $K$ a Choquet simplex.

A function system $V$ is called nuclear if $V \otimes_{\min} W = V \otimes_{\max} W$ for any function system $W$. Namioka and Phelps proved that a compact convex set $K$ is a Choquet simplex if and only if the continuous affine function system $A(K)$ on $K$ is nuclear. Dually, a function system $V$ is nuclear if and only if its state space $S(V)$ is a Choquet simplex.

Paulsen and Tomforde introduced a functorial process, called Archimedeanizaton, for forming an Archimedean ordered space from an ordered real vector space with an order unit. Given an ordered real vector space $(V,V^+)$ with an order unit $e$, we let $$D := \{ v \in V : \varepsilon e + v \in V^+\ \text{for all}\ \varepsilon>0 \} \quad \text{and} \quad N := D \cap -D.$$ The Archimedeanization $V_{\rm Arch}$ of $V$ is defined as an ordered real vector space $(V \slash N, D+N)$ with an order unit $e+N$. Then, $V_{\rm Arch}$ is an Archimedean ordered space. The Archimedeanization is characterized by the universal property that it satisfies: for an Archimedean ordered space $W$ and a
unital positive map $\varphi : V \to W$, there exists a unique unital positive linear map $\tilde \varphi : V_{\rm Arch} \to W$ with $\varphi = \tilde \varphi \circ q$. $$\xymatrix{V \ar[rr]^{q} \ar[dr]_{\varphi} && V_{\rm Arch} \ar[dl]^{\tilde \varphi} \\ &W &}$$

We say that a subspace $J$ of an Archimedean ordered space $V$ is an order ideal of $V$ if $p \in J$ and $0 \le q \le p$ imply that $q \in J$. The Archimedean quotient of $V$ by $J$ is defined as the Archimedeanization of $(V \slash J, V^+ + J, e+J)$. Given Archimedean ordered spaces $V,W$ and a unital positive linear map $\varphi : V \to W$, the Archimedean quotient by $\ker \varphi$ is unitally order isomorphic to $(V \slash \ker \varphi, (V \slash \ker \varphi )^+, e+\ker \varphi)$, where $$(V \slash \ker \varphi )^+ := \{ v + \ker \varphi : \forall \varepsilon>0, \exists j \in \ker \varphi \ \text{such that}\ j+\varepsilon e + v \in V^+ \}.$$ The map $\tilde \varphi : V \slash \ker \varphi \to W$ given by $\tilde \varphi (v + \ker \varphi ) = \varphi (v)$ is a unital positive linear map. $$\xymatrix{V \ar[rr]^{q} \ar[dr]_{\varphi} && V \slash \ker \varphi \ar[dl]^{\tilde \varphi} \\ & W &}$$

\section{Maximal tensor products of function systems}

First, we give an explicit description for the positive elements in the maximal tensor products of function systems applying Archimedeanization. We define $$V^+ \otimes W^+ := \{ \sum_{i=1}^n v_i \otimes w_i \in V \otimes W : n \in \mathbb N,  v_i \in V^+, w_i \in W^+ \}$$ and $$D := \{ z \in V \otimes W : \forall \varepsilon > 0, z + \varepsilon e_V \otimes e_W \in V^+ \otimes W^+ \}.$$ For $v \in V$ and $w \in W$, we have $$\begin{aligned} & \|v\| \|w\| e_V \otimes e_W \pm v \otimes w \\ = & {1 \over 2}(\|v\| e_V \pm v) \otimes (\|w\| e_W + w) + {1 \over 2} (\|v\| e_V \mp v) \otimes (\|w\| e_W - w) \in V^+ \otimes W^+.\end{aligned}$$ Hence, $(V \otimes W, V^+ \otimes W^+, e_V \otimes e_W)$ is an ordered real vector space with an order unit. Given $z \in D$ and $f \in S(V), g \in S(W)$, we have $$0 \le (f \otimes g)(z+\varepsilon e_V \otimes e_W) \le (f \otimes g)(z)+\varepsilon$$ for any $\varepsilon>0$. It follows that $D \subset (V \otimes_{\min} W)^+$, so $N:=D \cap -D = \{ 0\}$. Hence, the triple $(V \otimes W, D, e_V \otimes e_W)$ is an Archimedeanization of $(V \otimes W, V^+ \otimes W^+, e_V \otimes e_W)$.

\begin{thm}
Suppose that $(V,V^+,e_V)$ and $(W,W^+,e_W)$ are function systems. Then the maximal tensor product $V \otimes_{\max} W$ coincides with the Archimedeanization $(V \otimes W, D, e_V \otimes e_W)$ of $(V \otimes W, V^+ \otimes W^+, e_V \otimes e_W)$.
\end{thm}

\begin{proof}
$\subset$) Let $z \in (V \otimes_{\max} W)^+$ and $f$ be a state on $(V \otimes W, D, e_V \otimes e_W)$. Since $V^+ \otimes W^+ \subset D$, we have $f|_{V^+ \otimes W^+} \ge 0$, so $f \in S_{\max}(V \otimes W)$. It follows that $f(z) \ge 0$ for all states $f$ on $(V \otimes W, D, e_V \otimes e_W)$. By \cite[Proposition 2.20]{PT}, $z$ belongs to $D$.

$\supset$) Let $z \in D$ and $f \in S_{\max} (V \otimes W)$. Since $z+\varepsilon e_V \otimes e_W \in V^+ \otimes W^+$ for $\varepsilon>0$, we have $$0 \le f(z+\varepsilon e_V \otimes e_W) = f(z) + \varepsilon.$$ It follows that $f(z) \ge 0$, so $z \in (V \otimes_{\max} W)^+$.
\end{proof}

The maximal tensor products of function systems are characterized by the following universal property.

\begin{prop}\label{universal}
Suppose that $V, W$ and $Z$ are function systems and $\Phi : V \times W \to Z$ is a bilinear map such that $\Phi (v,w) \in Z^+$ for all $v \in V^+$ and $w \in W^+$. Then
there exists a unique positive linear map $\tilde \Phi : V \otimes_{\max} W \to Z$ such that $\Phi (v,w) = \tilde \Phi (v \otimes w)$. $$\xymatrix{V \times W \ar[rr] \ar[dr]_{\Phi} && V \otimes_{\max} W \ar[dl]^{\tilde \Phi} \\ &Z &}$$
\end{prop}

\begin{proof}
Suppose that $z \in (V \otimes_{\max} W)^+$. Then, we have $z+\varepsilon e_V \otimes e_W \in V^+ \otimes W^+$ for any $\varepsilon > 0$. It follows that $$0 \le \tilde{\Phi}(z+\varepsilon e_V \otimes e_W) = \tilde{\Phi} (z) + \varepsilon \Phi(e_V, e_W) \le \tilde{\Phi}(z) + \varepsilon \|\Phi(e_V, e_W)\| e_Z$$ for any $\varepsilon > 0$. Since $e_Z$ is Archimedean, $\tilde \Phi(z)$ belongs to $Z^+$.
\end{proof}

\begin{prop}\label{duality1}
Suppose that $V$ and $W$ are function systems. A functional $\varphi : V \otimes_{\max} W \to \mathbb R$ is positive if and only if its associated linear map $L_\varphi : V \to W^*$ is positive.
\end{prop}

\begin{proof}
Since $$\langle L_\varphi (v), w \rangle = \varphi (v \otimes w), \qquad v \in V, w \in W,$$ $L_\varphi : V \to W^*$ is positive if and only if $\varphi : V \times W \to \mathbb R$ is a positive bilinear map. By Proposition \ref{universal}, this is equivalent to the positivity of $\varphi : V \otimes_{\max} W \to \mathbb R$.
\end{proof}

The second dual of a function system is also a function system \cite[Proposition 3.7]{Ef}.

\begin{prop}\label{bidual}
Suppose that $V, W$ are function systems and $\iota_{W} : W \to W^{**}$ is a canonical inclusion. Then the map $${\rm id}_V \otimes \iota_W : V \otimes_{\max} W \to V \otimes_{\max} W^{**}$$ is an order embedding.
\end{prop}

\begin{proof}
Suppose that ${\rm id}_V \otimes \iota_W (z) \in (V \otimes_{\max} W^{**})^+$ and $\varphi \in S(V \otimes_{\max} W)$. By Proposition \ref{duality1}, the associated linear map $L_\varphi : V \to W^*$ is positive. Since the composition $\iota_{W^*} \circ L_\varphi : V \to W^{***}$ is also positive, its associated functional $\varphi^\wedge$ on $V \otimes_{\max} W^{**}$ is a state by Proposition \ref{duality1} again. From $$\varphi (z) = \varphi^\wedge ({\rm id}_V \otimes \iota_W (z)) \ge 0, \qquad \varphi \in S(V \otimes_{\max} W),$$ we see that $z \in (V \otimes_{\max} W)^+$.
\end{proof}

\begin{prop}\label{cross}
Suppose that $(V,V^+,e_V)$ and $(W,W^+,e_W)$ are function systems. The order norms $\| \cdot \|_{V
\otimes_{\min} W}$ and $\| \cdot \|_{V \otimes_{\max} W}$ are cross norms with respect to the order norms of $V$ and $W$. In addition, the inequality $\| \cdot \|_{V \otimes_{\min} W} \le \| \cdot \|_{V \otimes_{\max} W}$ holds.
\end{prop}

\begin{proof}
Let $v \in V$ and $w \in W$. From $$\begin{aligned} & \|v\| \|w\| e_V \otimes e_W \pm v \otimes w \\
= & {1 \over 2}(\|v\| e_V \pm v) \otimes (\|w\| e_W + w) + {1 \over 2} (\|v\| e_V \mp v) \otimes (\|w\| e_W - w) \in V^+ \otimes W^+,\end{aligned}$$ we see that $\| \cdot \|_{V \otimes_{\max} W}$ is a subcross norm. By the definition of the order norm, the inclusion $(V \otimes_{\max} W)^+ \subset (V \otimes_{\min} W)^+$ implies the inequality $\| \cdot \|_{V \otimes_{\min} W} \le \| \cdot \|_{V \otimes_{\max} W}$. It follows that $$\begin{aligned} \|v\|\|w\| & = \sup \{ |(f \otimes g)(v \otimes w)| : f \in S(V), g \in S(W) \} \\ & \le \| v \otimes w \|_{V \otimes_{\min} W} \\ & \le \| v \otimes w \|_{V \otimes_{\max} W} \\ & \le \|v\|\|w\|,\end{aligned}$$ because $f \otimes g$ is a state on $V \otimes_{\min} W$.
\end{proof}

Let $A \in \mathbb M_m (\mathbb R)$ and $B \in \mathbb M_n (\mathbb R)$. We denote the functional $$X \in M_m(\mathbb R) \mapsto {\rm tr} (XA) \in \mathbb R$$ by ${\rm tr} (\ \cdot\ A)$. From $$A^*A \otimes B^*B = (A \otimes B)^* (A \otimes B) \qquad \text{and} \qquad {\rm tr} (\ \cdot\ A) \otimes {\rm tr} (\ \cdot\ B) = {\rm tr} (\ \cdot\ (A \otimes B)),$$ we see that $$(\mathbb M_m(\mathbb R) \otimes_{\max} \mathbb M_n(\mathbb R))^+ \subset \mathbb M_{mn}(\mathbb R)^+ \subset (\mathbb M_m(\mathbb R) \otimes_{\min} \mathbb M_n(\mathbb R))^+.$$ Since the transpose map $\bf t$ on $\mathbb M_2(\mathbb R)$ is a unital positive map and $${\rm id}_{\mathbb M_2(\mathbb R)} \otimes {\bf t} (\begin{pmatrix} 1&0&0&1 \\ 0&0&0&0 \\ 0&0&0&0 \\ 1&0&0&1 \end{pmatrix}) = \begin{pmatrix} 1&0&0&0 \\ 0&0&1&0 \\
0&1&0&0 \\ 0&0&0&1 \end{pmatrix} \notin \mathbb M_4(\mathbb R)^+,$$ we see that $$\begin{pmatrix} 1&0&0&1 \\ 0&0&0&0 \\ 0&0&0&0 \\ 1&0&0&1 \end{pmatrix} \in \mathbb M_4(\mathbb R)^+\ \backslash\ (\mathbb M_2(\mathbb R) \otimes_{\max} \mathbb M_2(\mathbb R))^+$$ and  $$\begin{pmatrix} 1&0&0&0 \\ 0&0&1&0 \\ 0&1&0&0 \\ 0&0&0&1 \end{pmatrix} \in (\mathbb M_2(\mathbb R) \otimes_{\min} \mathbb M_2(\mathbb R))^+\ \backslash\ \mathbb M_4(\mathbb R)^+.$$

\begin{defn}
Suppose that $T : V \to W$ is a unital positive surjective linear map for function systems $V$ and $W$. We call $T : V \to W$ {\sl an order quotient map} if for any $w$ in $W^+$ and $\varepsilon>0$, we can find an element $v$ in $V$ such that $$v + \varepsilon e_V \in V^+ \quad \text{and} \quad T(v) = w.$$
\end{defn}

The key point of the above definition is that the lifting $v$ depends on the choice of $\varepsilon>0$. By slightly modifying \cite[Theorem 2.45]{PT}, we obtain the following proposition, which justifies the term {\sl order quotient map}.

\begin{prop}
Suppose that $T : V \to W$ is a unital positive surjective linear
map for function systems $V$ and $W$. Then $T : V \to W$ is an
order quotient map if and only if $\tilde{T} : V \slash \ker T \to
W$ is an order isomorphism.
\end{prop}

\begin{proof}
$T : V \to W$ is an order quotient map \begin{enumerate} \item[$\Leftrightarrow$] $\forall w \in W^+, \forall \varepsilon>0, \exists v \in V, v+ \varepsilon e_V \in V^+$ and $T(v)=w$ \item[$\Leftrightarrow$] $\forall w \in W^+, \exists v \in V, v + \ker T \in (V \slash \ker T)^+$ and $\tilde T(v + \ker T)=w$ \item[$\Leftrightarrow$] ${\tilde T} : V \slash \ker T \to W$ is an order isomorphism.
\end{enumerate}
\end{proof}

Recall that a bounded linear map $T : V \to W$ for normed spaces $V$ and $W$ is called a quotient map if it maps the open unit ball of $V$ onto the open unit ball of $W$.

\begin{prop} \label{order norm}
Suppose that $T : V \to W$ is a unital positive linear map for function systems $V$ and $W$. If $T : V \to W$ is a quotient map, then it is an order quotient map.
\end{prop}

\begin{proof}
Let $w \in W^+$ and $\varepsilon >0$. Then we have $$-{1 \over 2} \|w\| e_W \le w - {1 \over 2}\|w\| e_W \le {1 \over 2} \|w\| e_W.$$ There exists an element $v$ in $V$ such that $$T(v)=w - {1 \over 2} \|w\| e_W \quad \text{and} \quad \|v\| \le {1 \over 2} \|w\| + \varepsilon.$$ It follows that $$v + {1 \over 2} \|w\| e_V + \varepsilon e_V \in V^+ \quad \text{and} \quad T(v+{1 \over 2} \|w\| e_V) = w.$$
\end{proof}

The following theorem shows the projectivity of the maximal tensor product.

\begin{thm}
For function systems $V_1, V_2, W$ and an order quotient map $Q : V_1 \to V_2$, the linear map $Q \otimes {\rm id}_W : V_1 \otimes_{\max} W \to V_2 \otimes_{\max} W$ is an order quotient map.
\end{thm}

\begin{proof}
Let $z \in (V_2 \otimes_{\max} W)^+$ and $\varepsilon > 0$. Then we have $$z + {\varepsilon \over 2} e_{V_2} \otimes e_W \in V_2^+ \otimes W^+.$$ We write $$z=\sum_{i=1}^n v_i \otimes w_i -{\varepsilon \over 2} e_{V_2} \otimes e_W$$ for $v_i \in V_2^+$ and $w_i \in W^+$. There exists $u_i$ in $V_1$ such that $$Q(u_i)=v_i \quad \text{and} \quad u_i + {\varepsilon \over 2n\|w_i\|} e_{V_1} \in V_1^+$$ for each $1 \le i \le n$. It follows that $$(Q \otimes {\rm id}_W)(\sum_{i=1}^n u_i \otimes w_i -{\varepsilon \over 2} e_{V_1} \otimes e_W) = \sum_{i=1}^n v_i \otimes w_i - {\varepsilon \over 2} e_{V_2} \otimes e_W = z$$ and $$\begin{aligned} &(\sum_{i=1}^n u_i \otimes w_i - {\varepsilon \over 2} e_{V_1} \otimes e_W) + \varepsilon e_{V_1} \otimes e_W \\ = & \sum_{i=1}^n (u_i + {\varepsilon \over 2n\|w_i\|}e_{V_1}) \otimes w_i + \sum_{i=1}^n {\varepsilon \over 2n} e_{V_1} \otimes (e_W - {1 \over \|w_i\|} w_i) \\ \in &\ V_1^+ \otimes W^+\end{aligned}$$
\end{proof}

\section{Nuclear function systems}

A function system $V$ is called {\sl nuclear} if the identity $$V \otimes_{\min} W = V \otimes_{\max} W$$ holds for any function system $W$.

\begin{prop}\label{dual}
Suppose that $E$ is a finite-dimensional function system and $s$ is a faithful state on $E$. Then, $(E^*, (E^*)^+, s)$ is a function system.
\end{prop}

\begin{proof}
Let $f$ be a linear functional on $E$. Since the set $K :=\{ x \in E^+ : \|v\|=1 \}$ is compact and $s(x) > 0$ for any $x \in K$, the continuous function $|{f \over s} |$ has a maximum $M$ on $K$. Then we have $-M s \le f \le M s$. Suppose that $f+\varepsilon s \ge 0$ for any $\varepsilon>0$. Then $$0 \le (f + \varepsilon s)(x) = f(x) + \varepsilon s(x)$$ for all $\varepsilon >0$ and $x \in E^+$, so $f \ge 0$.
\end{proof}

\begin{prop}\label{duality2}
Suppose that $E$ and $W$ are function systems with $E$ finite dimensional. Then $z \in (E^* \otimes_{\min} W)^+$ if and only if its associated linear map $T_z : E \to W$ is positive.
\end{prop}

\begin{proof}
Since $E$ is finite dimensional, every positive functional on $E^*$ is an evaluation $\hat{v}$ for some $v \in E^+$. We have $$z \in (E^* \otimes_{\min} W)^+ \qquad \text{if and only if} \qquad 0 \le (\hat{v} \otimes f) (z) = f(T_z(v))$$ for all $v \in E^+$ and $f \in (W^*)^+$, which is equivalent to the positivity of $T_z : E \to W$.
\end{proof}

We apply the idea of \cite[Theorem 3.1]{HP} to function systems.

\begin{thm}\label{main1}
Suppose that $\Phi : V \to W$ is a unital positive map for function systems $V$ and $W$. The following are equivalent:
\begin{enumerate}
\item[(i)] the map $${\rm id}_A \otimes \Phi : A \otimes_{\min} V \to A \otimes_{\max} W$$ is positive for any function system $A$; \item[(ii)] the map $${\rm id}_E \otimes \Phi : E \otimes_{\min} V \to E \otimes_{\max} W$$ is positive for any finite dimensional function system $E$; \item[(iii)] there exist nets of unital positive maps $\varphi_\lambda : V \to \ell^\infty_{n_\lambda}$ and $\psi_\lambda : \ell^\infty_{n_\lambda} \to W$ such that $\psi_\lambda \circ \varphi_\lambda$ converges to the map $\Phi$ in the point-norm topology.
$$\xymatrix{V \ar[rr]^\Phi \ar[dr]_{\varphi_\lambda} && W \\ & \ell^\infty_{n_\lambda} \ar[ur]_{\psi_\lambda} &}$$
\end{enumerate}
\end{thm}

\begin{proof}
Clearly, (i) implies (ii).

$\rm (iii)\Rightarrow (i).$  First, let us show that $\ell^\infty_n$ is nuclear. We choose an element $\sum_{i=1}^n x_i \otimes e_i$ in $(A \otimes_{\min} \ell_n^\infty)^+$. We have $$0 \le (f \otimes e_j') (\sum_{i=1}^n x_i \otimes e_i) = f(x_j)$$ for all $f \in S(A)$ and $1 \le j \le n$. We see that $x_j \in A^+$, thus $$\sum_{i=1}^n x_i \otimes e_i \in A^+ \otimes {\ell^\infty_n}^+.$$ Hence, $\ell^\infty_n$ is nuclear. The map $${\rm id}_A \otimes \psi_\lambda \circ \varphi_\lambda : A \otimes_{\min} V \to A \otimes_{\min} \ell^\infty_{n_\lambda} = A \otimes_{\max} \ell^\infty_{n_\lambda} \to A \otimes_{\max} W$$ is unital positive.  Since $\|\cdot \|_{A \otimes_{\max} W}$ is a cross norm by Proposition \ref{cross}, $({\rm id}_A \otimes \psi_\lambda \circ \varphi_\lambda) (z)$ converges to ${\rm id}_A \otimes \Phi (z)$ for each $z \in A \otimes V$. It follows that $z \in (A \otimes_{\min} V)^+$ implies ${\rm id}_A \otimes \Phi (z) \in (A \otimes_{\max} W)^+$.

\vskip 1pc

$\rm (ii) \Rightarrow (iii).$ Let $E$ be a finite dimensional function subsystem of $V$ and $s$ be a faithful state on $E$. By Proposition \ref{dual}, $(E^*, (E^*)^+, s)$ is a function system. By Proposition \ref{duality2}, we can regard the inclusion $\iota : E \subset V$ as an element in $(E^* \otimes_{\min} V)^+$. The restriction $\Phi |_E : E \to W$ can be identified with an element $({\rm id}_{E^*} \otimes \Phi) (\iota)$. By assumption, this belongs to $(E^* \otimes_{\max} W)^+$. We consider the directed set $$\Omega = \{(E, \varepsilon) : \text{$E$ is a finite dimensional
function subsystem of $V$}, \varepsilon >0 \}$$ with the standard partial order. Let $\lambda = (E, \varepsilon)$. For any $\varepsilon >0$, the restriction $\Phi|_E$ can be written as $$\Phi|_E + \varepsilon s \otimes e_W = \sum_{k=1}^{n_\lambda} f_k \otimes w_k$$ for nonzero $f_k \in (E^*)^+$ and $w_k \in W^+$ ($1 \le k \le n_\lambda$). The map $$f : x \in E \mapsto (f_1(x), \cdots, f_{n_\lambda}(x)) \in \ell^\infty_{n_\lambda}$$ is positive, and we may assume that $f(e_V) = (c_1, \cdots, c_{m_\lambda}, 0, \cdots, 0)$ for $c_k>0$ by rearrangement. For $x \in E^+$, we have $$0 \le f(x) \le \|x\|f(e_V) = \|x\| (c_1, \cdots, c_{m_\lambda}, 0, \cdots, 0).$$ Since every element in $E$ can be written as a difference of positive elements in $E$ and each $f_k$ is assumed to be nonzero, we have that $m_\lambda = n_\lambda$ and $f(e_V)$ is invertible. Because $$\sum_{k=1}^{n_\lambda} f_k \otimes w_k = \sum_{k=1}^{n_\lambda} {f_k \over f_k (e_V)} \otimes f_k (e_V) w_k,$$ we may assume that $f$ is a unital positive map. By the Krein theorem, $f : E \to \ell^\infty_{n_\lambda}$ extends to a unital positive map $\varphi_\lambda : V \to \ell^\infty_{n_\lambda}$. We define a positive map $\psi'_\lambda : \ell^\infty_{n_\lambda} \to W$ by $$\psi'_\lambda(c_1,\cdots,c_{n_\lambda}) = \sum_{k=1}^{n_\lambda} c_k w_k.$$ For $x \in E$, we have $$\|\Phi(x)-\psi'_\lambda \circ \varphi_\lambda (x) \| = \|\Phi(x)-\sum_{k=1}^{n_\lambda} f_k(x) w_k \| = \varepsilon \| s(x) e_W \| \le \varepsilon \|x\|.$$ Hence, we can take nets of unital positive maps $\varphi_\lambda : V \to \ell^\infty_n$ and positive maps $\psi'_\lambda :
\ell^\infty_{n_\lambda} \to W$ such that $\psi'_\lambda \circ \varphi_\lambda$ converges to the map $\Phi$ in the point-norm topology. Since each $\varphi_\lambda$ is unital, $\psi'_\lambda (1_{n_\lambda})$ converges to $e_W$. Let us choose a state $\omega_\lambda$ on $\ell^\infty_{n_\lambda}$ and set $$\psi_\lambda(c) := {1 \over \| \psi'_\lambda\|} \psi'_\lambda(c) + \omega_\lambda(c) (e_W - {1 \over \| \psi'_\lambda \|} \psi'_\lambda (1_{n_{\lambda}})).$$ Then $\psi_\lambda : \ell^\infty_{n_\lambda} \to W$ is a unital positive map such that $\psi_\lambda \circ \varphi_\lambda$ converges to the map $\Phi$ in the point-norm topology.
\end{proof}

As an application of Theorem \ref{main1}, we give elementary proofs of known characterizations of nuclear function systems \cite[Theorem 7.4]{Ef}, \cite[Theorem 2.2]{NP} except for the implication from the last.

\begin{thm}
Let $V$ be a function system. The following are equivalent:
\begin{enumerate}
\item[(i)] $V$ is nuclear; \item[(ii)] we have $$V \otimes_{\min} E = V \otimes_{\max} E$$ for any finite dimensional function system $E$; \item[(iii)] there exist nets of unital positive maps $\varphi_\lambda : V \to \ell^\infty_{n_\lambda}$ and $\psi_\lambda : \ell^\infty_{n_\lambda} \to V$ such that $\psi_\lambda \circ \varphi_\lambda$ converges to ${\rm id}_V$ in the point-norm topology; \item[(iv)] there exist nets of weak$^*$-continuous unital positive maps $\varphi_\lambda : V^{**} \to \ell^\infty_{n_\lambda}$ and $\psi_\lambda : \ell^\infty_{n_\lambda} \to V^{**}$ such that $\psi_\lambda \circ \varphi_\lambda$ converges to ${\rm id}_{V^{**}}$ in the point-weak$^*$ topology; \item[(v)] the second dual $V^{**}$ is injective; \item[(vi)] given $V \subset V^{**} \subset C(K)$ with a compact set $K$, there exists a unital positive map $\Phi : C(K) \to V^{**}$ such that $\Phi|_V = {\rm id}_V$; \item[(vii)] given both any function system $W_1$ containing $V$ and a function system $W_2$, the inclusion $V \otimes_{\max} W_2 \subset W_1 \otimes_{\max} W_2$ is an order embedding; \item[(viii)] given both any function system $W$ containing $V$ and a finite dimensional function system $E$, the inclusion $V \otimes_{\max} E \subset W \otimes_{\max} E$ is an order embedding; \item[(ix)] given both any function systems $W_1 \subset W_2$, the inclusion $V \otimes_{\max} W_1 \subset V \otimes_{\max} W_2$ is an order embedding; \item[(x)] the state space $S(V)$ is a Choquet simplex.
\end{enumerate}
\end{thm}

\begin{proof}
Putting $V=W$ and $\Phi = {\rm id}_V$ in Theorem \ref{main1}, we obtain the equivalences $\rm (i)\Leftrightarrow (ii) \Leftrightarrow (iii)$.  For a compact set $K$, we can show that $C(K)$ satisfies (iii) using the partition of unity. For the details, see the proof of \cite[Theorem 2.3.7]{L}. The implications $\rm (v)\Rightarrow (vi)$, $\rm (vii) \Rightarrow (viii)$ and $\rm (i) \Rightarrow (ix)$ are trivial.

$\rm (iii)\Rightarrow (iv).$ The proof consists of the idea of \cite[Thoerem 4.5]{EOR} and the perturbation argument. Take $\varepsilon>0$ and a finite dimensional function subsystem $E$ of $V^{**}$ and a finite dimensional subspace $F$ of $V^*$. By the principle of local reflexivity, we can choose a linear map $\theta : E \to V$ such that $\|\theta\| < 1+\varepsilon$, $\theta (v) = v$ for $v \in E \cap V$ and $\langle \theta (x), f \rangle = \langle x, f \rangle$ for $x \in E, f \in F$. By the second condition, the map $\theta : E \to V$ is unital. By assumption, there exist unital positive maps $\varphi : V \to \ell^\infty_n$ and $\psi : \ell^\infty_n \to V$ such that $\|\psi \circ \varphi (\theta (x)) -\theta (x)\| \le \varepsilon \|\theta (x)\|$ for all $x \in E$. Since the inclusion $\iota : E \subset V^{**}$ is the dual map of $\iota^*|_{V^*} : V^* \to E^*$, the map $\iota^*|_{V^*} : V^* \to E^*$ is a quotient map. Applying this to each coordinate of $\varphi \circ \theta : E \to \ell^\infty_n$, we obtain its weak$^*$-continuous extension $\varphi' : V^{**} \to \ell^\infty_n$ with $\|\varphi'\|<1+\varepsilon$. $$\xymatrix{E \ar@{^{(}->}[r] \ar[dd]_\theta& V^{**} \ar[r] \ar[d]_{\varphi'} & V^{**} \\ & \ell^\infty_n \ar[rd]^\psi & \\  V \ar[ru]^\varphi \ar[rr]^{{\rm id}_V} && V \ar@{^{(}->}[uu]}$$ Suppose that $V \subset C(K)$ for a compact set $K$. By the Hahn-Banach theorem and the Riesz representation theorem, $\varphi'|_V : V \to \ell^\infty_n$  is the restriction of $(\mu_1, \cdots, \mu_n) : C(K) \to \ell^\infty_n$ for some $\mu_k \in M_{\mathbb R}(K)$ with $\|\mu_k\|<1+\varepsilon$. $$\xymatrix{C(K) \ar[drr]^-{(\mu_1, \cdots, \mu_n)} && \\ V \ar@{^{(}->}[u] \ar[rr]_-{\varphi'|_V} && \ell^\infty_n}$$ Let $\mu_k = \mu_k^+ - \mu_k^-$ be the Jordan decomposition. Then, we have $$\mu_k^+(1) \ge \mu_k^+(1) - \mu_k^-(1) = \mu_k(1) = 1.$$ It follows that $$\begin{aligned} \|\mu_k - {\mu_k^+ \over \mu_k^+(1)}\| & = \|\mu_k^+ - \mu_k^- - {\mu_k^+ \over \mu_k^+(1)}\| \\  & \le \|\mu_k^-\| + {\mu_k^+(1) -1 \over \mu_k^+(1)} \|\mu_k^+\| \\ & = \|\mu_k^-\|+\|\mu_k^+\|-1 \\ & = \|\mu_k\|-1 \\ & < \varepsilon. \end{aligned}$$ Let  $\varphi'' : V^{**} \to \ell^\infty_n$ be the second dual of the restriction of $({1 \over \mu_1^+(1)}\mu_1^+, \cdots, {1 \over \mu_n^+(1)}\mu_n^+)$ on $V$. Then $\varphi'' : V^{**} \to \ell^\infty_n$ is a weak$^*$-continuous unital positive map satisfying $$\|\varphi' - \varphi''\|=\|\varphi'|_V-\varphi''|_V\| \le \max \{ \|\mu_k - {\mu_k^+ \over \mu_k^+(1)}\| : 1 \le k \le n \}<\varepsilon.$$ For all $x \in E$ and $f \in F$, we have $$\begin{aligned} & |\langle \psi \circ \varphi'' (x) -x, f \rangle| \\ \le & |\langle \psi \circ \varphi'' (x) - \psi \circ \varphi' (x), f \rangle| + |\langle \psi \circ \varphi \circ \theta (x) -\theta (x), f \rangle| + |\langle \theta (x) -x, f \rangle| \\ < & (2 \varepsilon+\varepsilon^2) \|x\| \|f\|. \end{aligned}$$ The index set $\{ (\varepsilon, E, F) \}$ is directed in such a way that $(\varepsilon_1, E_1, F_1) \le (\varepsilon_2, E_2, F_2)$ if and only if $\varepsilon_1 \ge \varepsilon_2$ and $E_1 \subset E_2, F_1 \subset F_2$.

$\rm (iv)\Rightarrow (v).$ Suppose that $W_1 \subset W_2$ are function systems and $\Phi : W_1 \to V^{**}$ is a unital positive map. Let $\Phi_\lambda : W_2 \to \ell^\infty_{n_\lambda}$ be a unital positive extension of $\varphi_\lambda \circ \Phi : W_1 \to \ell^\infty_{n_\lambda}$. Then the point-weak$^*$ cluster point of $\psi_\lambda \circ \Phi_\lambda : W_2 \to V^{**}$ is the unital positive extension of $\Phi : W_1 \to V^{**}$.
$$\xymatrix{W_2 \ar[rr]^-{\Phi_\lambda} && \ell^\infty_{n_\lambda} \ar[dr]^{\psi_\lambda} & \\ W_1 \ar@{^{(}->}[u] \ar[r]^\Phi & V^{**} \ar[rr]^-{{\rm id}_{V^*}} \ar[ru]^{\varphi_\lambda} && V^{**}}$$

$\rm (vi)\Rightarrow (i).$ Suppose that $V^{**} \subset C(K)$ for a compact set $K$. By Proposition \ref{bidual}, we have $$\xymatrix{V \otimes_{\min} W \ar[r] & C(K) \otimes_{\min} W = C(K) \otimes_{\max} W \ar[r]^-{\Phi \otimes {\rm id}_W} & V^{**} \otimes_{\max} W \\ && V \otimes_{\max} W. \ar@{^{(}->}[u]}$$

$\rm (v)\Rightarrow (vii).$ Since there exists a positive projection from $W_1^{**}$ onto $V^{**}$, the inclusion $V^{**} \otimes_{\max} W_2 \subset  W_1^{**} \otimes_{\max} W_2$ is an order embedding. By Proposition \ref{bidual}, we have $$\xymatrix{W_1 \otimes_{\max} W_2 \ar@{^{(}->}[r] & W_1^{**} \otimes_{\max} W_2 \\ V \otimes_{\max} W_2 \ar@{^{(}->}[r] & V^{**} \otimes_{\max} W_2. \ar@{^{(}->}[u]}$$

$\rm (viii)\Rightarrow (ii).$ Suppose that $V \subset C(K)$ for a compact set $K$. The conclusion follows from $$\xymatrix{V \otimes_{\max} E \ar@{^{(}->}[r] & C(K) \otimes_{\max} E \\ V \otimes_{\min} E \ar@{^{(}->}[r] & C(K) \otimes_{\min} E. \ar@{=}[u]}$$

$\rm (ix)\Rightarrow (i).$ Suppose that $W \subset C(K)$ for a compact set $K$. The conclusion follows from $$\xymatrix{V \otimes_{\max} W \ar@{^{(}->}[r] & V \otimes_{\max} C(K) \\ V \otimes_{\min} W \ar@{^{(}->}[r] & V \otimes_{\min} C(K). \ar@{=}[u]}$$

$\rm (iii)\Rightarrow (x).$ Suppose that $f$ and $g$ are linear functionals on $V$. The net $\varphi_\lambda^* \circ \psi_\lambda^* : V^* \to V^*$ converges to ${\rm id}_{V^*}$ in the point-weak$^*$ topology. $$\xymatrix{V^* \ar[rr]^{{\rm id}_{V^*}} \ar[dr]_{\psi_\lambda^*} && V^* \\ & \ell^1_{n_\lambda} \ar[ur]_{\varphi_\lambda^*} &}$$ The ordered space $\ell^1_{n_\lambda} = (\ell^\infty_{n_\lambda})^*$ is lattice ordered. Let $h \in V^*$ be a weak$^*$ cluster point of $\varphi_\lambda^* (\psi_\lambda^* (f) \vee \psi_\lambda^* (g))$. Applying $\varphi_\lambda^*$ to $\psi_\lambda^* (f), \psi_\lambda^* (g) \le \psi_\lambda^* (f) \vee \psi_\lambda^* (g)$ and taking limit, we see that $f,g \le h$. If $f, g \le k$, then $\psi_\lambda^* (f), \psi_\lambda^* (g) \le \psi_\lambda^* (k)$. Applying $\varphi_\lambda^*$ to $\psi_\lambda^* (f) \vee \psi_\lambda^* (g) \le \psi_\lambda^* (k)$ and taking limit, we also see that $h \le k$.

For $\rm (x)\Rightarrow (i)$, see \cite[Theorem 2.2]{NP}. Alternatively, see \cite[Lemma 6.3, Corollary 6.4]{Ef} for $\rm (x)\Rightarrow (v)$.
\end{proof}

\section{The complex case}

An ordered $*$-vector space $(V,V^+)$ is a pair consisting of a $*$-vector space $V$ and a proper cone $V^+$ in $V_h$. We may define a partial ordering on $V_h$ by defining $v \ge w$ if and only if $v-w \in V^+$. If $(V,V^+)$ is an ordered $*$-vector space, an element $e \in V_h$ is called an order unit for $V$ if, for all $v \in V_h$, there exists a real number $r>0$ such that $re + v \ge 0$. If $(V,V^+)$ is an ordered $*$-vector space with an order unit $e$, then we say that $e$ is an Archimedean order unit if $v \in V^+$ whenever $v \in V$ and $\varepsilon e + v \ge 0$ for any $\varepsilon >0$. In this case, we call the triple $(V,V^+,e)$ an Archimedean ordered $*$-vector space. Paulsen and Tomforde proved that every Archimedean ordered $*$-vector space can be embedded into a complex continuous function algebra on a compact Hausdorff space through a unital order isomorphism that is also isometric with respect to the minimal order norm \cite[Theorem 5.2]{PT}.

In this section, we consider the tensor products of Archimedean ordered $*$-vector spaces. For $*$-vector spaces $V$ and $W$, the involution on $V \otimes W$ is defined as $$(v \otimes w)^* = v^* \otimes w^*$$ for $v \in V$ and $w \in W$.

\begin{defn}
Suppose that $(V,V^+,e_V)$ and $(W,W^+,e_W)$ are Archimedean ordered $*$-vector spaces.
\begin{enumerate}
\item We define a minimal tensor product $V \otimes_{\min} W$ as $(V \otimes W, (V \otimes_{\min} W)^+, e_V \otimes e_W)$, where $(V \otimes_{\min} W)^+ =\{ z \in V \otimes W : (f \otimes g)(z) \ge 0\ \text{for all}\ f \in
S(V), g \in S(W)\}$. \item We define a maximal tensor product $V \otimes_{\max} W$ as $(V \otimes W, (V \otimes_{\max} W)^+, e_V \otimes e_W)$, where $(V \otimes_{\max} W)^+ = \{ z \in V \otimes W : z + \varepsilon e_V \otimes e_W \in V^+ \otimes W^+\ \text{for all}\ \varepsilon>0 \}$.
\end{enumerate}
\end{defn}

If a norm $\|\cdot\|$ on $V$ satisfies $\|v\|=\|v^*\|$ for all $v \in V$ and extends the order norm on $V_h$, then we call it an order norm on $V$. Two extremal order norms on Archimedean ordered $*$-vector spaces are defined in \cite[Definition 4.4]{PT} and \cite[Definition 4.6]{PT}. The minimal order norm $\|\cdot\|_m$ on $V$ is defined by $$\|v\|_m := \sup \{ |f(v)| : f : V \to \mathbb C \ \text{is a state} \}.$$ The maximal order norm $\|\cdot\|_M$ on $V$ is defined by $$\|v\|_M := \inf \{ \sum_{i=1}^n |\lambda_i| \|v_i\| : v=\sum_{i=1}^n \lambda_i v_i \ \text{with} \ v_i \in V_h \ \text{and} \ \lambda_i \in \mathbb C \}.$$

\begin{thm}
Suppose that $(V,V^+,e_V)$ and $(W,W^+,e_W)$ are Archimedean ordered $*$-vector spaces.
\begin{enumerate}
\item We have $(V \otimes_{\min} W)^+ = (V_h \otimes_{\min} W_h)^+$. Hence, the minimal tensor product $V \otimes_{\min} W$ is an Archimedean ordered $*$-vector space. \item We have $(V \otimes_{\max} W)^+ = (V_h \otimes_{\max} W_h)^+$. Hence, the maximal tensor product $V \otimes_{\max} W$ is an Archimedean ordered $*$-vector space. \item The minimal order norm induced by the minimal tensor product is a cross norm with respect to the minimal order norms of $V$ and $W$. \item The maximal order norm induced by the maximal tensor product is a subcross norm with respect to the maximal order norms of $V$ and $W$.
\end{enumerate}
\end{thm}

\begin{proof}
(1) First, let us show that $(V \otimes_{\min} W)^+ \subset V_h \otimes W_h$. Let $z=\sum_{k=1}^n v_k \otimes w_k \in (V \otimes_{\min} W)^+$. We may assume that each $v_k$ is Hermitian and $\{ v_k \}_{k=1}^n$ is $\mathbb R$-linearly independent. For $f \in S(V)$ and $g \in S(W)$, we have $$\sum_{k=1}^n f(v_k) g(w_k^*) = \sum_{k=1}^n f(v_k) \overline{g(w_k)} = \overline{(f \otimes g) (z)} = (f \otimes g) (z) = \sum_{k=1}^n f(v_k) g(w_k).$$ It follows that $$f(\sum_{k=1}^n g(w_k - w_k^*) v_k ) = 0$$ for all $f \in S(V)$. By \cite[Proposition 3.12]{PT}, we have $\sum_{k=1}^n g(w_k - w_k^*) v_k = 0$. We see that $$\sum_{k=1}^n ({\rm Re}g(w_k - w_k^*)) v_k = \sum_{k=1}^n {\rm Re}(g(w_k - w_k^*) v_k) = 0 = \sum_{k=1}^n {\rm Im} (g(w_k - w_k^*) v_k) = \sum_{k=1}^n ({\rm Im} g(w_k - w_k^*)) v_k.$$ Since $\{ v_k \}_{k=1}^n$ is $\mathbb R$-linearly independent, we have $g(w_k - w_k^*)=0$ for all $g \in S(W)$ and $1 \le k \le n$. By \cite[Proposition 3.12]{PT} again, $w_k$ is Hermitian.

 For a real functional $f : V_h \to \mathbb R$, the complexification $\tilde f : V \to \mathbb C$ is defined by $$\tilde f (v) = f({v + v^* \over 2}) + i f({v - v^* \over 2i})$$ \cite[Definition 3.9]{PT}. Then, $f : V_h \to \mathbb R$ is a state if and only if $\tilde f : V \to \mathbb C$ is a state \cite[Proposition 3.10]{PT}. Moreover, every state on $V$ is realized in this form \cite[Proposition 3.11]{PT}. It follows that $$(V \otimes_{\min} W)^+ = (V_h \otimes_{\min} W_h)^+.$$

\bigskip

(3) We denote by $\| \cdot \|_{V \otimes_{\min} W,m}$ the minimal order norm induced by the minimal tensor product $V
\otimes_{\min} W$. For $v \in V$ and $w \in W$, we have $$\|v\|_m \|w\|_m =  \sup \{ |(f \otimes g)(v \otimes w)| : f \in S(V), g \in S(W) \} \le  \|v \otimes w \|_{V \otimes_{\min} W, m}$$ because $f \otimes g \in S(V \otimes_{\min} W)$. For a state $F$ on $V \otimes_{\min} W$, we let $$F(v \otimes w) = e^{i\theta} |F(v \otimes w)| \quad \text{and} \quad u = e^{-i \theta} v.$$  Then, we have $$\begin{aligned} & |F(v \otimes w)| \\ = & F(u \otimes w) \\ = & F({1 \over 2} u \otimes w + {1 \over 2} u^* \otimes w^*) \\ = & F(({u+u^* \over 2}) \otimes ({w+w^* \over 2}) - ({u - u^* \over 2i}) \otimes ({w - w^* \over 2i})) \\ \le & \| ({u+u^* \over 2}) \otimes ({w+w^* \over 2}) - ({u - u^* \over 2i}) \otimes ({w - w^* \over 2i}) \|_{V_h \otimes_{\min} W_h} \\ = & \sup \{ |f({u+u^* \over 2}) g({w+w^* \over 2}) - f({u - u^* \over 2i}) g({w - w^* \over 2i})| : f \in S(V_h), g \in S(W_h)\} \\ \le & \sup \{ |(f({u+u^* \over 2}) + i f({u - u^* \over
2i}))(g({w+w^* \over 2}) +i  g({w - w^* \over 2i}))| : f \in S(V_h), g \in S(W_h)\} \\ = & \sup \{ |\tilde f (u) \tilde g(w)| : f \in S(V_h), g \in S(W_h) \} \\ = & \|u\|_m \|w\|_m \\ = & \|v\|_m \|w\|_m. \end{aligned}$$

\bigskip

(4) We denote by $\| \cdot \|_{V \otimes_{\max} W,M}$ the maximal order norm induced by the maximal tensor product $V \otimes_{\max} W$. For $v \in V$ and $w \in W$, we write $$v = \sum_{k=1}^m \lambda_k v_k \quad \text{and} \quad w = \sum_{l=1}^n \mu_l w_l$$ for $\lambda_k, \mu_l \in \mathbb C$ and $v_k \in V_h, w_l \in W_h$. Then we have $$v \otimes w = \sum_{1 \le k \le m \atop 1 \le l \le n} \lambda_k \mu_l\ v_k \otimes w_l$$ and $$(\sum_{k=1}^m |\lambda_k| \|v_k\|)(\sum_{l=1}^n |\mu_l|\|w_l\|) = \sum_{1 \le k \le m \atop 1 \le l \le n} | \lambda_k \mu_l| \|v_k \otimes w_l\|_{V_h \otimes_{\max} W_h}.$$ It follows that $$\| v \otimes w \|_{V \otimes_{\max} W, M} \le \|v\|_M \|w\|_M.$$
\end{proof}

For the definitions of OMIN and OMAX in the following proposition, we refer to \cite[Definition 3.3]{PTT} and \cite[Definition 3.12]{PTT}.

\begin{prop}
For an Archimedean ordered $*$-vector space $V$, we have $$M_n({\rm OMIN}(V))^+ = (\mathbb M_n (\mathbb C) \otimes_{\min} V)^+ \quad \text{and} \quad M_n({\rm OMAX}(V))^+ = (\mathbb M_n (\mathbb C) \otimes_{\max} V)^+.$$
\end{prop}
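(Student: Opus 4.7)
The plan is to unpack the definitions of $\mathrm{OMIN}$ and $\mathrm{OMAX}$ from \cite{PTT} and match them directly with the definitions of the injective and projective tensor products given earlier in this paper. Identify $M_n \otimes V$ with $M_n(V)$ via $(v_{ij}) = \sum_{i,j} E_{ij} \otimes v_{ij}$, and note that $e_{M_n} \otimes e_V = I_n \otimes e_V$.

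For the $\mathrm{OMIN}$ identity, I would first recall that the states on $M_n$ are exactly the maps $\phi_\rho(A) = \mathrm{tr}(\rho A)$ for $\rho \in M_n^+$ with $\mathrm{tr}(\rho) = 1$. A direct calculation yields
$$(\phi_\rho \otimes f)\bigl((v_{ij})\bigr) = \sum_{i,j} \rho_{ji} f(v_{ij}) = \mathrm{tr}\bigl(\rho \cdot (f(v_{ij}))\bigr),$$
where $(f(v_{ij})) \in M_n$ is the scalar matrix obtained by applying the state $f \in S(V)$ entrywise. Since the cone $(M_n \otimes_\varepsilon V)^+$ lies in the hermitian part of $M_n(V)$ (by the argument used in part (1) of the previous theorem), the matrix $(f(v_{ij}))$ is self-adjoint, and the standard duality fact $F \in M_n^+ \iff \mathrm{tr}(\rho F) \ge 0$ for all $\rho \in M_n^+$ gives
$$(v_{ij}) \in (M_n \otimes_\varepsilon V)^+ \iff (f(v_{ij})) \in M_n^+ \text{ for every } f \in S(V).$$
The right-hand side is precisely the defining condition of $M_n(\mathrm{OMIN}(V))^+$ from \cite{PTT}.

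For the $\mathrm{OMAX}$ identity, I would use the fact (proved in \cite{PTT}) that $M_n(\mathrm{OMAX}(V))^+$ is the Archimedean closure, with respect to the order unit $I_n \otimes e_V$, of the cone generated by decomposable positives $\alpha \, \mathrm{diag}(v_1, \ldots, v_k)\, \alpha^*$ with $v_i \in V^+$ and $\alpha \in M_{n,k}(\mathbb C)$. Writing $\alpha$ in terms of its columns $\alpha_i$ gives $\alpha \, \mathrm{diag}(v_1,\ldots,v_k)\, \alpha^* = \sum_i (\alpha_i \alpha_i^*) \otimes v_i$, and conversely every $A \in M_n^+$ is a sum of rank-one positives $\alpha \alpha^*$; hence the algebraic cone of decomposable positives coincides exactly with $M_n^+ \otimes V^+$ in the notation of this paper. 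The Archimedean closure with respect to $I_n \otimes e_V$ then gives
$$M_n(\mathrm{OMAX}(V))^+ = \{ z \in M_n(V) : z + \varepsilon\, I_n \otimes e_V \in M_n^+ \otimes V^+ \text{ for all } \varepsilon > 0 \},$$
which is the very definition of $(M_n \otimes_\pi V)^+$.

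The main (and essentially the only) obstacle is being careful about the conventions used in \cite{PTT} for $\mathrm{OMAX}$: one must verify that the "decomposable positives" description and the "sums of $A \otimes v$" description agree, which is the column-decomposition observation above. Apart from that, the proof is a routine unwinding of definitions, so it can be presented quite concisely.
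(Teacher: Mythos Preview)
Your proposal is correct and, for the $\mathrm{OMIN}$ identity, follows essentially the same route as the paper: the paper's proof consists of the single computation
\[
f\bigl([g(v_{ij})]\bigr)=\sum_{i,j} f(e_{ij})g(v_{ij})=(f\otimes g)\Bigl(\sum_{i,j} e_{ij}\otimes v_{ij}\Bigr)
\]
for $f\in S(\mathbb M_n)$, $g\in S(V)$, and then invokes \cite[Theorem~3.2]{PTT}. Your version is the same calculation written out with the density-matrix parametrization of $S(\mathbb M_n)$ and the explicit duality $F\in M_n^+\Leftrightarrow \mathrm{tr}(\rho F)\ge 0$; the extra remark about hermiticity is harmless (in fact $\mathrm{tr}(\rho F)\ge 0$ for all density matrices already forces $F=F^*$).

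For the $\mathrm{OMAX}$ identity the paper gives no argument at all (only the first identity is addressed in the printed proof), so your paragraph actually supplies what is missing. Your column-decomposition observation, showing that the cone of matrices of the form $\alpha\,\mathrm{diag}(v_1,\dots,v_k)\,\alpha^*$ coincides with $M_n^+\otimes V^+$, followed by taking the Archimedean closure with respect to $I_n\otimes e_V$, is exactly the right way to match the \cite{PTT} description of $\mathrm{OMAX}$ with the definition of $(\mathbb M_n\otimes_\pi V)^+$ in this paper. One small wording point: what you call ``the defining condition'' of $M_n(\mathrm{OMIN}(V))^+$ is in \cite{PTT} a theorem (their Theorem~3.2) rather than the literal definition, so it is safer to cite it as such, just as the paper does.
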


\begin{proof}
For $f \in S(\mathbb M_n(\mathbb C))$ and $g \in S(V)$, we have $$f([g(v_{ij})]_{(i,j)}) = f(\sum_{i,j=1}^n g(v_{ij})e_{ij}) = \sum_{i,j=1}^n f(e_{ij})g(v_{ij}) = (f \otimes g)(\sum_{i,j=1}^n e_{ij} \otimes v_{ij}).$$ The first identity follows from \cite[Theorem 3.2]{PTT}.
\end{proof}

Hereafter, we list the complex versions of the statements in Sections 3 and 4. Their proofs are similar to those of real cases, or the restriction and the complexification \cite[Remark 3.14]{PT} enable us to reduce them to the real cases. Hence, most proofs will be omitted.

\begin{prop}
Suppose that $V, W$ and $Z$ are Archimedean ordered $*$-vector spaces and $\Phi : V \times W \to Z$ is a bilinear map such that $\Phi (v,w) \in Z^+$ for all $v \in V^+$ and $w \in W^+$. Then, there exists a unique positive linear map $\tilde \Phi : V \otimes_{\max} W \to Z$ such that $\Phi (v,w) = \tilde \Phi (v \otimes w)$.
\end{prop}

\begin{prop}
Suppose that $S : V_1 \to V_2$ and $T : W_1 \to W_2$ are unital positive linear maps for Archimedean ordered $*$-vector spaces $V_1, V_2, W_1, W_2$. Then \begin{enumerate} \item $S \otimes T : V_1 \otimes_{\min} W_1 \to V_2 \otimes_{\min} W_2$ is a unital positive linear map. \item $S \otimes T : V_1 \otimes_{\max} W_1 \to V_2 \otimes_{\max} W_2$ is a unital positive linear map.
\end{enumerate}
\end{prop}

\begin{defn}
Suppose that $T : V \to W$ is a unital positive surjective linear map for Archimedean ordered $*$-vector spaces $V$ and $W$. We call $T : V \to W$ {\sl an order quotient map} if for any $w$ in $W^+$ and $\varepsilon>0$, we can find an element $v$ in $V$ such that $$v + \varepsilon e_V \in V^+ \quad \text{and} \quad T(v) = w.$$
\end{defn}

\begin{prop}
Suppose that $T : V \to W$ is a unital positive surjective linear map for Archimedean ordered $*$-vector spaces $V$ and $W$. Then $T : V \to W$ is an order quotient map if and only if $\tilde{T} : V \slash \ker T \to W$ is an order isomorphism.
\end{prop}

\begin{prop}
Suppose that $T : V \to W$ is a unital positive linear map for Archimedean ordered $*$-vector spaces $V$ and $W$. Then \begin{enumerate} \item $T : V \to W$ is an order embedding if and only if it is an isometry with respect to the minimal order norms, and \item $T : V \to W$ is an order quotient map if it is a quotient map with respect to the order norms.
\end{enumerate}
\end{prop}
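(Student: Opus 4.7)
The plan is to reduce the proposition to the real case proof of Proposition \ref{order norm} by using the state-theoretic definition of the minimal order norm together with the correspondence between real states on $V_h$ and complex states on $V$ recorded in \cite[Proposition 3.10, 3.11]{PT}.

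For part (1), I would argue both directions through the state space. If $T:V\to W$ is a unital order embedding, then pullback by $T$ maps $S(W)$ into $S(V)$, which yields
\[
\|T(v)\|_m = \sup_{f \in S(W)} |f(T(v))| \le \sup_{g \in S(V)} |g(v)| = \|v\|_m.
\]
For the reverse, every state on $V$ extends to a state on $W$: working with hermitian parts via the correspondence $g \leftrightarrow g|_{V_h}$, I apply the real Hahn-Banach extension \cite[Corollary 2.15]{PT} to the restriction $g|_{V_h}$ viewed as a state on the order embedded $V_h \subset W_h$, and then complexify. This gives $\|v\|_m \le \|T(v)\|_m$. Conversely, suppose $T$ is an isometry in $\|\cdot\|_m$. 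To show $T$ is an order embedding, it suffices (by the complex Archimedeanization) to prove that $T(v) \in W^+$ with $v$ hermitian forces $v \in V^+$. For any state $g \in S(V)$ extended to $\tilde g \in S(W)$, one has $g(v) = \tilde g(T(v)) \ge 0$; the hermitian part of the argument from \cite[Proposition 3.12]{PT} combined with the Archimedean property then yields $v \in V^+$. Alternatively this direction can simply be cited from the $*$-vector space version of \cite[Prop 2.27, 2.28]{PT}.

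For part (2), I follow verbatim the real case argument. Given $w \in W^+$ and $\varepsilon > 0$, the hermitian element $w - \tfrac{1}{2}\|w\|e_W$ satisfies $\|w - \tfrac{1}{2}\|w\|e_W\| \le \tfrac{1}{2}\|w\|$. Since $T$ is a norm quotient map, there exists $v' \in V$ with
\[
T(v') = w - \tfrac{1}{2}\|w\|e_W, \qquad \|v'\| < \tfrac{1}{2}\|w\| + \varepsilon.
\]
To ensure the lift is hermitian I replace $v'$ by $\tfrac{1}{2}(v' + (v')^*)$, which still lifts the hermitian target (because $T$ is $*$-linear, being unital and positive) and has norm at most $\|v'\|$ since the order norm on $V$ is a $*$-norm. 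Setting $v = v' + \tfrac{1}{2}\|w\|e_V$ gives $T(v) = w$ and $v + \varepsilon e_V \in V^+$ because $-v' \le \|v'\| e_V \le (\tfrac{1}{2}\|w\| + \varepsilon) e_V$.

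The only point requiring genuine care, and which I expect to be the main obstacle, is the extension of states in part (1): the real version uses a scalar Hahn-Banach argument, and transferring it to the complex setting demands that one consistently pass back and forth between $V_h \subset W_h$ and $V \subset W$ via the real/complex state bijection of \cite[Prop 3.10, 3.11]{PT}. Everything else is a routine adaptation of the real proofs already given in this section.
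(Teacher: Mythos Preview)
Your argument for part (2) is correct and is exactly what the paper does: it says ``In the proof of Proposition \ref{order norm}, we consider the hermitian lifting $\tfrac{1}{2}(v+v^*)$,'' which is precisely your replacement step.

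For part (1), the paper does not argue directly at all; it simply cites \cite[Theorem 4.22]{PT}, which is the complex analogue of the real \cite[Prop.~2.27, 2.28]{PT} you mention as an alternative. Your detailed state-extension argument, however, has a circularity in the converse direction. When you assume only that $T$ is a $\|\cdot\|_m$-isometry and then write ``for any state $g\in S(V)$ extended to $\tilde g\in S(W)$,'' you are invoking the Hahn--Banach extension of states from $T(V)$ to $W$. That extension requires $g\circ T^{-1}$ to be positive on $T(V)$ with respect to the cone inherited from $W$, i.e.\ it already needs $T$ to be an order embedding---the very thing you are trying to prove. The clean fix is the reduction you hint at but do not carry out: restrict $T$ to $V_h\to W_h$, observe that the minimal order norm agrees with the unique order norm on hermitian elements so $T|_{V_h}$ is an isometry, apply the real case (Proposition \ref{order norm}(1)) to conclude $T|_{V_h}$ is an order embedding, and then positivity on all of $V$ follows since $V^+\subset V_h$. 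With that repair your proof is complete and in line with the paper's intent; your ``alternatively cite'' remark is in fact the paper's entire proof of (1).
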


\begin{proof}
(1) This follows from \cite[Theorem 4.22]{PT}.

(2) In the proof of Proposition \ref{order norm}, we consider the Hermitian lifting ${1 \over 2}(v + v^*)$.
\end{proof}

\begin{thm}
(1) For Archimedean ordered $*$-vector spaces $V_1, V_2, W$ and a unital order embedding $\iota : V_1 \to V_2$, the linear map $\iota \otimes {\rm id}_W : V_1 \otimes_{\min} W \to V_2 \otimes_{\min} W$ is a unital order embedding.

(2) For Archimedean ordered $*$-vector spaces $V_1, V_2, W$ and an order quotient map $Q : V_1 \to V_2$, the linear map $Q \otimes {\rm id}_W : V_1 \otimes_{\max} W \to V_2 \otimes_{\max} W$ is an order quotient map.
\end{thm}

\begin{proof}
(1) Combining \cite[Corollary 2.15]{PT} with \cite[Proposition 3.11]{PT}, we obtain a Hahn-Banach type theorem for a state on an Archimedean ordered $*$-vector space.
\end{proof}

\begin{defn}
An Archimedean ordered $*$-vector space $V$ is called {\sl nuclear} if the identity $$V \otimes_{\min} W = V \otimes_{\max} W$$ holds for any Archimedean ordered $*$-vector space $W$.
\end{defn}

\begin{prop}
An Archimedean ordered $*$-vector space $V$ is nuclear if and only if the Archimedean ordered space $V_h$ is nuclear.
\end{prop}

\begin{proof}
Let $W$ be an Archimedean ordered space and $W^{\mathbb C} := W \oplus i W$ be its complexification \cite[Remark 3.14]{PT}. The conclusion follows from $$(V_h \otimes_{\min} W)^+ = (V \otimes_{\min} W^{\mathbb C})^+ \quad \text{and} \quad (V_h \otimes_{\max} W)^+ = (V \otimes_{\max} W^{\mathbb C})^+.$$
\end{proof}

\begin{thm}
Suppose that $\Phi : V \to W$ is a unital positive map for Archimedean ordered $*$-vector spaces $V$ and $W$. The
following are equivalent:
\begin{enumerate}
\item[(i)] the map $${\rm id}_A \otimes \Phi : A \otimes_{\min} V \to A \otimes_{\max} W$$ is positive for any Archimedean ordered $*$-vector space $A$; \item[(ii)] the map $${\rm id}_E \otimes \Phi : E \otimes_{\min} V \to E \otimes_{\max} W$$ is positive for any finite dimensional Archimedean ordered $*$-vector space $E$; \item[(iii)] there exist nets of unital positive maps $\varphi_\lambda : V \to \ell^\infty_{n_\lambda}(\mathbb C)$ and $\psi_\lambda : \ell^\infty_{n_\lambda}(\mathbb C) \to W$ such that $\psi_\lambda \circ \varphi_\lambda$ converges to the map $\Phi$ in the point-norm topology.
\end{enumerate}
\end{thm}

\begin{proof}
$\rm (ii) \Rightarrow (iii).$ The map $${\rm id}_E \otimes \Phi|_{V_h} : E \otimes_{\min} V_h \to E \otimes_{\max} W_h$$ is positive for any finite dimensional Archimedean ordered space $E$. By Theorem \ref{main1}, there exist nets of unital positive maps $\varphi_\lambda : V_h \to \ell^\infty_{n_\lambda}(\mathbb R)$ and $\psi_\lambda : \ell^\infty_{n_\lambda}(\mathbb R) \to W_h$ such that $\psi_\lambda \circ \varphi_\lambda$ converges to $\Phi|_{V_h}$ in the point-norm topology. Let $\tilde{\varphi}_\lambda : V \to \ell^\infty_{n_\lambda}(\mathbb C)$ and $\tilde{\psi}_\lambda : \ell^\infty_{n_\lambda}(\mathbb C) \to W$ be the complexifications of $\varphi$ and $\psi$, respectively. Then, $\tilde{\psi}_\lambda \circ \tilde{\varphi}_\lambda$ converges to the map $\Phi$ in the point-norm topology.
\end{proof}

\begin{thm}
Let $V$ be an Archimedean ordered $*$-vector space. The following are equivalent:
\begin{enumerate}
\item[(i)] $V$ is nuclear; \item[(ii)] we have $$V \otimes_{\min} E = V \otimes_{\max} E$$ for any finite dimensional Archimedean ordered $*$-vector space $E$; \item[(iii)] there exist nets of unital positive maps $\varphi_\lambda : V \to \ell^\infty_{n_\lambda}(\mathbb C)$ and $\psi_\lambda : \ell^\infty_{n_\lambda}(\mathbb C) \to V$ such that $\psi_\lambda \circ \varphi_\lambda$ converges to ${\rm id}_V$ in the point-norm topology; \item[(iv)] there exist nets of weak$^*$-continuous unital positive maps $\varphi_\lambda : V^{**} \to \ell^\infty_{n_\lambda}(\mathbb C)$ and $\psi_\lambda : \ell^\infty_{n_\lambda}(\mathbb C) \to V^{**}$ such that $\psi_\lambda \circ \varphi_\lambda$ converges to ${\rm id}_{V^{**}}$ in the point-weak$^*$ topology; \item[(v)] the second dual $V^{**}$ is injective; \item[(vi)] given $V \subset V^{**} \subset C(K)$ with a compact set $K$, there exists a unital positive map $\Phi : C(K) \to V^{**}$ such that $\Phi|_V = {\rm id}_V$; \item[(vii)] given both any Archimedean ordered $*$-vector space $W_1$ containing $V$ and an Archimedean ordered $*$-vector space $W_2$, the inclusion $V \otimes_{\max} W_2 \subset W_1 \otimes_{\max} W_2$ is an order embedding; \item[(viii)] given both any Archimedean ordered $*$-vector space $W$ containing $V$ and a finite dimensional Archimedean ordered $*$-vector space $E$, the inclusion $V \otimes_{\max} E \subset W \otimes_{\max} E$ is an order embedding; \item[(ix)] given both any Archimedean ordered $*$-vector spaces $W_1 \subset W_2$, the inclusion $V \otimes_{\max} W_1 \subset V \otimes_{\max} W_2$ is an order embedding; \item[(x)] the state space $S(V_h)$ is a Choquet simplex.
\end{enumerate}
\end{thm}

\bigskip

{\bf Acknowledgments}

The author is grateful to the referee for careful reading and bringing his attention to Ref. \cite{El,F,GL,BR,GK}.

% ------------------------------------------------------------------------

\end{document}